\newtheorem{theorem}{Theorem}[section]
\newtheorem{proposition}[theorem]{Proposition}
\theoremstyle{definition}
\theoremstyle{remark}
\numberwithin{equation}{section}
\begin{document}
\setcounter{page}{1}

\title[ On The Farthest point Problem in Banach spaces ]
{ On The Farthest point Problem in Banach spaces }

\author[ A. Yousef, R. Khalil and B. Mutabagani ]{  A. Yousef$^{1}$, R. Khalil$^{2}$and B. Mutabagani$^{3}$}

\address{$^{1}$ Department of Mathematics, The University of Jordan , Al Jubaiha, Amman 11942, Jordan.} \email{\textcolor[rgb]{0.00,0.00,0.84}{abd.yousef@ju.edu.jo}}

\address{$^{2}$ Department of Mathematics, The University of Jordan , Al Jubaiha, Amman 11942, Jordan.} \email{\textcolor[rgb]{0.00,0.00,0.84}{roshdi@ju.edu.jo}}

\address{$^{3}$ Department of Mathematics, The University of Jordan , Al Jubaiha, Amman 11942, Jordan.} \email{\textcolor[rgb]{0.00,0.00,0.84}{almansor.326@gmail.com}}
\subjclass[2010]{Primary 46B20; Secondary 41A50; 41A65.}

\keywords{Uniquely remotal, Singleton, Banach space, farthest point problem.}
\begin{abstract}
If every point in a normed space X admits a unique farthest point in a given bounded subset E, then must
E be a singleton?. This is known as the farthest point problem. In an attempt to solve this problem, we give our contribution toward solving it, in the positive direction, by proving that every such subset $E$ in the sequence space $\ell^1$ is a singleton.
\end{abstract}\maketitle

\maketitle
\section{Introduction}

Let $X$ be a normed space, and $E$ be a closed and bounded subset of $X$. We define the real valued function $D(.,E): X\to \mathbb{R}$ by $$D(x,E)=\sup\{\|x-e\|: e\in E\},$$ the farthest distance function. We say that $E$ is remotal if for every $x\in X$, there exists $e\in E$ such that $D(x,E)=\|x-e\|$. In this case, we denote the set $\{e\in E: D(x,E)=\|x-e\|\}$ by $F(x,E)$. It is clear that $F(.,E):X\to E$ is a multi-valued function. However, if $F(.,E): X\to E$ is a single-valued function, then $E$ is called uniquely remotal. In such case, we denote $F(x,E)$ by $F(x),$ if no confusion arises.\\

The study of remotal and uniquely remotal sets has attracted many mathematicians in the last decades, due to its connection with the geometry of Banach spaces.
We refer the reader to \cite{aliansari},  \cite{baronti},  \cite{rao}, \cite{narang} and \cite{sk} for samples of these studies. However, uniquely remotal sets
are of special interest. In fact, one of the most interesting and hitherto unsolved problems in the theory of farthest points, known as the the farthest point problem, which is stated as:  If every point of a normed space $X$ admits a unique farthest point in a given bounded subset $E$, then must $E$ be a singleton ?.\\

This problem gained its importance when Klee \cite{klee} proved that:  singletoness of uniquely remotal sets is equivalent to convexity of Chybechev sets in Hilbert spaces (which is an open problem too, in the theory of nearest points).\\

Since then, a considerable work has been done to answer this question, and many partial results have been obtained toward solving this problem. We refer the reader
 to \cite{aliansari}, \cite{baronti}, \cite{narang} and \cite{sk} for some related work on uniquely remotal sets.\\

Centers of sets have played a major role in the study of uniquely remotal sets, see \cite{aliansari}, \cite{aliansari2} and \cite{baronti}. Recall that a center
 $c$ of a subset $E$ of a normed space $X$ is an element $c\in X$ such that  $$D(c,E)=\inf_{x\in X} D(x,E).$$ Whether a set has a center or not is another question. However, in inner
  product spaces, any closed bounded set does have a center \cite{aliansari}.\\

In \cite{niknam} it was proved that if $E$ is a uniquely remotal subset of a normed space, admitting a center $c$, and if $F$, restricted to the line segment
$[c,F(c)]$ is continuous at $c$, then $E$ is a singleton. Then recently, a generalization has been obtained in \cite{syk}, where the authors proved the singletoness of uniquely remotal sets if the farthest point mapping $F$ restricted to $[c,F(c)]$ is partially continuous at $c$. Furthermore, a generalization of Klee's result in \cite{klee}, "If a compact subset $E$, with a center $c$, is uniquely remotal in a normed space $X$, then $E$ must be a
singleton", was also obtained in \cite{syk}.\\

In this article, we prove that every uniquely remotal subset of the sequence space $\ell^1(\mathbb{R})$ is a singleton. Recall that $\ell^1(\mathbb{R})=\{ x=(x_n):~x_n\in \mathbb{R}~ and ~\sum_{n=1}^\infty |x_n|<\infty\}$.\\
\section{ Preliminaries}

In this section, we prove the following propositions that play a key role in the proof of the main result.

Throughout the rest of the paper, $F$ will denote the farthest distance single-valued function associated with a uniquely remotal set $E$.

\begin{proposition}
\label{prop1}
Let $E$ be a uniquely rematal subset of a Banach space $X$. Let $(x_n)$ be a sequence in $X$ such that $(x_n)$ converges to $x\in X$. If $F(x_n)=y$ for all $n$, where $y\in E$, then $F(x)=y$.

\end{proposition}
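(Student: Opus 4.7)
The plan is to exploit the pointwise nature of the defining inequality for the farthest point. Since $E$ is uniquely remotal with $F(x_n)=y$, for every $n$ and every $e\in E$ we have the inequality
\[
\|x_n-y\|=D(x_n,E)\geq \|x_n-e\|.
\]
I would fix an arbitrary $e\in E$ and take the limit in $n$ on both sides. Because $x_n\to x$ in norm, the continuity of the norm function gives $\|x_n-y\|\to \|x-y\|$ and $\|x_n-e\|\to \|x-e\|$, so that
\[
\|x-y\|\geq \|x-e\|.
\]

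Since $e\in E$ was arbitrary, this yields $\|x-y\|\geq \sup_{e\in E}\|x-e\|=D(x,E)$. Conversely, $y\in E$ forces $\|x-y\|\leq D(x,E)$, so $\|x-y\|=D(x,E)$, i.e., $y$ is a farthest point of $E$ from $x$. Unique remotality of $E$ then forces $F(x)=y$.

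There is no serious obstacle: the only ingredient beyond the definition of uniquely remotal is the continuity of the norm, which lets the pointwise inequalities pass to the limit. In essence, this proposition says that the graph of $F$ is closed along constant subsequences, which will be used later to transfer properties from approximating sequences to their limits when proving singletonness in $\ell^1$.
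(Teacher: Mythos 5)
Your proof is correct, but it follows a genuinely different (and cleaner) route than the paper's. The paper argues by contradiction: writing $w=F(x)$ and assuming $w\neq y$, it picks $\epsilon>0$ with $\|x-w\|>\|x-y\|+\epsilon$, chooses $n_0$ with $\|x_n-x\|<\frac{\epsilon}{2}$ for $n\geq n_0$, and then uses the triangle inequality to get $\|x_m-w\|>\|x_m-y\|$ for $m\geq n_0$, contradicting $F(x_m)=y$. You instead pass directly to the limit in the family of inequalities $\|x_n-y\|=D(x_n,E)\geq\|x_n-e\|$ for each fixed $e\in E$, obtaining $\|x-y\|\geq D(x,E)$, hence $\|x-y\|=D(x,E)$, so that $y\in F(x,E)$, and only then invoke single-valuedness of $F(\cdot,E)$ to conclude $F(x)=y$. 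Both arguments use nothing beyond continuity of the norm, but yours avoids the $\epsilon$-bookkeeping (whose chain of inequalities in the paper is slightly delicate) and actually establishes a marginally stronger fact: for any bounded set $E$, remotal or not, the set of points from which a fixed $y\in E$ is farthest is closed; unique remotality is needed only in the last line to identify the farthest point at $x$ with $y$. The paper's contradiction argument, by contrast, relies on the existence of the farthest point $F(x)=w$ at the limit point, which unique remotality supplies in either case.
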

\begin{proof}
Suppose that $F(x)\neq y$. Since $E$ is uniquely remotal, then there exists $w\in E$ such that $F(x)=w$. Further, there exists $\epsilon>0$ such that $||x-w||>||x-y||+\epsilon.$ Also, there exists $n_0\in \mathbb{N}$ such that $||x_n-x||<\frac{\epsilon}{2}$ for all $n\geq n_0$. Therefore, for $m\geq n_0$

\begin{eqnarray*}
||x_m-w||&\geq& ||x-w||-||x_m-x||\\
&>& ||x-y||+\epsilon-\frac{\epsilon}{2}\\
&>& ||x_m-y||+\frac{\epsilon}{2}-||x_m-x||\\
&>& ||x_m-y||.
\end{eqnarray*}
 This contradicts that $y=F(x_m)$. Hence, we must have $F(x)=y$.\\
\end{proof}
\begin{proposition}
\label{prop2}
Let $K$ be a compact subset of a Banach space $X$ and $E$ be uniquely remotal in $X$. Then there exist $x\in K$ and $e\in E$ such that $$D(E,K)=\sup\{||y-\theta||:y\in K,~\theta\in E\}=||e-x||.$$

\end{proposition}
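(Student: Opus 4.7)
The plan is to reduce the double supremum to a single-variable supremum of a continuous function on the compact set $K$, and then apply extreme value theorem together with the uniquely remotal hypothesis.

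First I would rewrite
\[
D(E,K)=\sup_{y\in K,\;\theta\in E}\|y-\theta\|=\sup_{y\in K}\Bigl(\sup_{\theta\in E}\|y-\theta\|\Bigr)=\sup_{y\in K} D(y,E).
\]
Thus it suffices to find $x\in K$ attaining $\sup_{y\in K}D(y,E)$, because then $e:=F(x)\in E$ (which exists since $E$ is uniquely remotal) satisfies $\|x-e\|=D(x,E)=D(E,K)$.

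Next I would verify that the map $y\mapsto D(y,E)$ is continuous on $X$ — in fact $1$-Lipschitz. For any $y_1,y_2\in X$ and $\theta\in E$, the triangle inequality gives $\|y_1-\theta\|\le \|y_1-y_2\|+\|y_2-\theta\|\le \|y_1-y_2\|+D(y_2,E)$; taking the supremum over $\theta\in E$ yields $D(y_1,E)\le \|y_1-y_2\|+D(y_2,E)$, and by symmetry $|D(y_1,E)-D(y_2,E)|\le \|y_1-y_2\|$.

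Since $K$ is compact in $X$ and $y\mapsto D(y,E)$ is continuous, the extreme value theorem produces $x\in K$ with $D(x,E)=\sup_{y\in K}D(y,E)=D(E,K)$. Setting $e=F(x)\in E$ gives $\|e-x\|=D(x,E)=D(E,K)$, completing the proof.

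There is no real obstacle here; the only subtlety is making sure $D(\cdot,E)$ is well-defined and finite on all of $X$, which holds because $E$ is bounded (implicit in the standing assumption that $E$ is uniquely remotal, hence closed and bounded per the introduction). Proposition~\ref{prop1} is not needed for this argument; the continuity of the farthest \emph{distance} function — not of the farthest \emph{point} map $F$ — is what does the work.
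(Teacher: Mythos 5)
Your proof is correct, and it takes a somewhat different route from the paper's. You factor the double supremum through the farthest distance function, show $y\mapsto D(y,E)$ is $1$-Lipschitz, invoke the extreme value theorem on the compact set $K$ to get a maximizer $x$, and only then use remotality to produce $e=F(x)$ with $\|x-e\|=D(x,E)=D(E,K)$. The paper instead works directly with a maximizing sequence of pairs $(e_n,x_n)\in E\times K$, extracts a convergent subsequence $x_{n_k}\to x$ by compactness of $K$, and sandwiches $\lim_k\|e_{n_k}-x_{n_k}\|$: it is at least $\|x-F(x)\|$ because $x\in K$ and $F(x)\in E$, and at most $\|x-F(x)\|$ by the triangle inequality $\|e_{n_k}-x_{n_k}\|\le\|x_{n_k}-x\|+\|e_{n_k}-x\|\le\|x_{n_k}-x\|+D(x,E)$. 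The two arguments use the same essential ingredients (compactness of $K$ and attainment of $D(x,E)$ at the farthest point of $x$), but your version is more modular: the Lipschitz estimate isolates the continuity once and for all, and it makes transparent that only remotality of $E$ — not uniqueness of farthest points — is needed, a point the paper's statement obscures. The paper's sequential argument, on the other hand, is self-contained and amounts to reproving the extreme value theorem for this particular function. Your closing remark that Proposition \ref{prop1} plays no role here also matches the paper, which does not use it in this proof.
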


\begin{proof}
From the definition of $D(E,K)$, there exist two sequences $(e_n)$ and $(x_n)$ in $E$ and $K$ respectively such that $$D(E,K)=\lim_{n\rightarrow \infty} ||e_n-x_n||.$$
Since $K$ is compact, then there exists a subsequence $(x_{n_k})$ of $(x_n)$ such that $(x_{n_k})$ converges to $x$ in $K$. So, $$D(E,K)=\lim_{k\rightarrow \infty} ||e_{n_k}-x_{n_k}||.$$

The definition of $D(E,K)$ implies that $D(E,K)\geq ||e'-x'||$ for all $e'\in E$ and $x'\in K$. Therefore, $$\displaystyle \lim_{k\rightarrow \infty}||e_{n_k}-x_{n_k}||\geq ||x-F(x)||.$$ But $$||e_{n_k}-x_{n_k}||\leq ||e_{n_k}-x||+||x-x_{n_k}||\leq ||x_{n_k}-x||+||x-F(x)||.$$
Thus
$$\lim_{k\rightarrow \infty}||x_{n_k}-y_{n_k}||\leq ||x-F(x)||.$$

Since $x\in K$ and $F(x)\in E$, it follows that $D(E,K)=||x-F(x)||$, which ends the proof.\\
\end{proof}

\section{Main Results}

Let $E$ be a uniquely remotal subset of a Banach space $X$. Let $x_0$ be an element in $X$ and $e_0\in E$ be the unique farthest point from $x_0$, i.e $F(x_0)=e_0$. Consider the closed ball $$B[x_0,||x_0-e_0||]=B[x_0,D(x_0,E)].$$ Then clearly $e_0$ lies on the boundary of $B[x_0,D(x_0,E)]$.\\

Let $J=\{B[y,||y-e_0||:F(y)=e_0\},$ and define the relation $"\leq"$ on $J$ as follows: $$B_1\leq B_2~~if~~B_2\subseteq B_1.$$ It is easy to see that the relation $"\leq"$ is a partial order.\\

Now, we claim the following.
\begin{theorem}
\label{thm1}
J has a maximal element.
\end{theorem}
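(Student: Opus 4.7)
The plan is to apply Zorn's lemma to the non-empty partially ordered set $(J,\le)$; note $J$ is non-empty since $B[x_0,\|x_0-e_0\|]\in J$. The single observation that drives everything is the ball-containment criterion in a normed space:
\[
B[y_2,r_2]\subseteq B[y_1,r_1]\ \Longleftrightarrow\ \|y_1-y_2\|\le r_1-r_2.
\]
The $(\Leftarrow)$ direction is an immediate triangle inequality; for $(\Rightarrow)$, one tests the criterion on the point $z=y_2+r_2(y_2-y_1)/\|y_2-y_1\|$ (assuming $y_1\ne y_2$), which lies on the boundary of $B[y_2,r_2]$ and satisfies $\|z-y_1\|=r_2+\|y_2-y_1\|$. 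Applied to two elements $B_i=B[y_i,\|y_i-e_0\|]$ of $J$, this forces $\|y_1-y_2\|=\|y_1-e_0\|-\|y_2-e_0\|$ whenever $B_2\subseteq B_1$ --- that is, equality in the reverse triangle inequality. In particular, distinct members of any chain in $J$ have distinct radii, and any comparable pair satisfies $\|y_1-y_2\|=|r_1-r_2|$.

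Given a chain $\mathcal C\subseteq J$, I would set $r^*:=\inf\{\|y-e_0\|:B[y,\|y-e_0\|]\in\mathcal C\}\ge 0$. If this infimum is attained by some $B^*\in\mathcal C$, then by the criterion $B^*\subseteq B$ for every $B\in\mathcal C$, so $B^*$ is already an upper bound. Otherwise, pick $B_n=B[y_n,r_n]\in\mathcal C$ with $r_n$ strictly decreasing to $r^*$; since the $B_n$ are pairwise comparable with distinct radii, the sequence is nested ($B_{n+1}\subseteq B_n$), and
\[
\|y_m-y_n\|=|r_m-r_n|\longrightarrow 0\quad(m,n\to\infty).
\]
Thus $(y_n)$ is Cauchy in the Banach space $X$ and converges to some $y^*$ with $\|y^*-e_0\|=r^*$. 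Because $F(y_n)=e_0$ for every $n$, Proposition~\ref{prop1} yields $F(y^*)=e_0$, and hence $B^*:=B[y^*,r^*]$ lies in $J$.

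It remains to verify that $B^*$ is an upper bound for $\mathcal C$. For any $B=B[y,r]\in\mathcal C$, the fact that $r^*$ is not attained forces $r>r^*$, so $r_n<r$ for large $n$. Comparability in the chain and the criterion then give $B_n\subseteq B$, so $\|y_n-y\|=r-r_n$; passing to the limit yields $\|y^*-y\|=r-r^*$, i.e.\ $B^*\subseteq B$, which in the order of $J$ reads $B\le B^*$. An application of Zorn's lemma produces the desired maximal element. The main technical hurdle is the ball-containment criterion; once it is at hand, both the Cauchy property of the centers and the identification $F(y^*)=e_0$ via Proposition~\ref{prop1} are essentially immediate, and Zorn's lemma does the rest.
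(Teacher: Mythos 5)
Your proof is correct, and it follows the same overall skeleton as the paper's: partially order $J$ by reverse inclusion, take the infimum $r^*$ of the radii along a chain, dispose of the attained case, otherwise pass to a sequence of balls with radii decreasing to $r^*$, obtain a limit center $y^*$, invoke Proposition \ref{prop1} to get $F(y^*)=e_0$, and check that $B[y^*,r^*]$ sits inside every member of the chain before applying Zorn. The difference is in how the convergence of centers is obtained, and your version is the cleaner one. The paper argues by contradiction that $(y_n)$ has a convergent subsequence, constructing a point of the smaller ball pushed away from the other center --- this is exactly your test point $z$ --- but it phrases the negation incorrectly (``no convergent subsequence'' does not give a single $\epsilon$ separating all pairs), and its upper-bound verification relies on an unjustified uniform $\epsilon'$ and on the strict inequality $\|y-w\|<\|y-F(y)\|$, which need not hold for $w$ on the boundary. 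You instead isolate the ball-containment criterion $B[y_2,r_2]\subseteq B[y_1,r_1]\Leftrightarrow\|y_1-y_2\|\le r_1-r_2$ and combine it with the fact that $e_0$ lies on the boundary of every ball in $J$ (so the reverse triangle inequality gives the opposite bound), yielding the exact identity $\|y_1-y_2\|=|r_1-r_2|$ for comparable members. This makes $(y_n)$ Cauchy outright --- no subsequence extraction --- and turns the upper-bound check into a one-line passage to the limit in $\|y_n-y\|=r-r_n$, rather than a contradiction argument. In short: same strategy and same underlying geometric fact, but your packaging of it as an equality criterion supplies the quantitative control the paper's write-up only gestures at.
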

\begin{proof}
Let $T$ be any chain in $J$. Consider the net $\{||y_\alpha-e_0||:\alpha \in I\}$. Notice that if $B_{\alpha_1}\leq B_{\alpha_2}$ then $||y_{\alpha_2}-e_0||\leq ||y_{\alpha_1}-e_0||$. Let $\displaystyle r=\inf_{\alpha \in I}||y_\alpha-e_0||$. Then it is easy to see that if the infimum is attained at some $\alpha_0$, then $B_{\alpha_0}[y_{\alpha_0},||y_{\alpha_0}-e_0||]$ is an upper bound for $T$. If the infimum is not attained then there exists a sequence $(B_n)$ in $T$ such that $\displaystyle \lim_{n\rightarrow \infty}||y_n-e_0||=\inf_{\alpha\in I}||y_\alpha-e_0||=r$.\\

We claim that $(y_n)$ has a convergent subsequence. If not, then there exists $\epsilon>0$ such that $||y_n-y_m||>\epsilon$ for all $n,~m$. Clearly we can assume that $\epsilon<r$. \\

Since $\displaystyle \lim_{n\rightarrow \infty}||y_n-e_0||=r$, then there exists $n_0\in \mathbb{N}$ such that $||y_n-e_0||<r+\frac{\epsilon}{2}$ for all $n\geq n_0$. But $||y_{n_0}-y_{{n_0}+1}||>\epsilon$, so $B_{n_0}\subseteq B_{{n_0}+1}$. Farther, $r\leq ||y_{n_0}-e_0||$ and $||y_{{n_0}+1}-e_0||<r+\frac{\epsilon}{2}.$  Without loss of generality, we can assume, for simplicity, that $y_{n_0}=0$. Then the element $v=(1+\frac{r}{||y_{{n_0}+1}||}y_{{n_0}+1})\in B_{{n_0}+1}$. \\

Now, $||v-0||=||v||=||y_{{n_0}+1}||+r>r+\epsilon$. Thus, $v\not\in B_{n_0}$ which contradicts the fact that $B_{n_0+1}\subseteq B_{n_0}$. Hence, there is a subsequence $(y_{n_k})$ that converges to some element, say $y$. By assumption $F(y_{n_k})=e_0$ for all $n_k$, which implies by Proposition \ref{prop1} that $F(y)=e_0$. Thus, $B[y,||y-e_0||]\in J$. \\

It suffices now to show that $B[y,||y-e_0||]\subseteq B_\alpha$ for all $\alpha\in I$. If this is not true then there exists $w\in B[y,||y-e_0||]$ such that $w\not\in B_{m_1}$ for some $m_1$. Since $(B_n)$ is a chain, then $w\not\in B_{n_k}$ for all $n_k>m_1$. Furthermore, $||w-y_{n_k}||>r+\epsilon'$ for some $\epsilon'>0$ and all $n_k>m_1$. \\

But $||w-y_{n_k}||\leq ||y_{n_k}-y||+||y-w||$, where $||y_{n_k}-y||\rightarrow 0$ and $||y-w||<||y-F(y)||=||y-r||$. It follows that $\displaystyle \liminf_{n_k}||w-y_{n_k}||\leq r$, which contradicts the fact that $||w-y_{n_k}||>r+\epsilon'$. This means that $B[y,||y-e_0||]$ is an upper bound for the chain $T$. Hence, By Zorn's lemma $J$ has a maximal element.\\

\end{proof}

Now we are ready to prove the main result of this paper.
\begin{theorem}
Every uniquely remotal set in $\ell^1(\mathbb{R})$ is a singleton.
\end{theorem}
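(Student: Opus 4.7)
The plan is to invoke Theorem \ref{thm1} to extract a maximal element $B^* = B[y^*, r^*] \in J$, where $r^* = \|y^* - e_0\|$ and $F(y^*) = e_0$, and then to derive a contradiction from the assumption that $E$ is not a singleton by exhibiting a ball in $J$ strictly contained in $B^*$.

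First I would record the immediate consequences of maximality of $B^*$: every point of $E$ lies within $r^* = D(y^*, E)$ of $y^*$, so $E \subseteq B^*$ with $e_0 \in \partial B^*$; and for each $t \in (0,1]$ the radial point $y_t := (1-t) y^* + t e_0$ satisfies $\|y_t - e_0\| = (1-t) r^*$ together with $B[y_t, (1-t) r^*] \subsetneq B^*$, which forces $F(y_t) \neq e_0$ by maximality. Picking $t_n \downarrow 0$ and setting $e_n := F(y_{t_n}) \in E \setminus \{e_0\}$, Proposition \ref{prop1} rules out any norm-convergent subsequence of $(e_n)$ whose limit differs from $e_0$.

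Next I would exploit the coordinate structure of $\ell^1(\mathbb{R})$. Let $P_k$ denote the projection onto the first $k$ coordinates and $Q_k = I - P_k$, so that $\|x - e\| = \|P_k(x - e)\| + \|Q_k(x - e)\|$. The truncation $P_k(E)$ sits inside a finite-dimensional subspace and is therefore relatively compact, which makes Proposition \ref{prop2} applicable to $\overline{P_k(E)}$ against compact test sets drawn from $\{y_{t_n}\} \cup \{y^*\}$. Using the one-sided expansion $\|y - e_0 + s v\| = \|y - e_0\| + s \sum_i \mathrm{sgn}(y_i - (e_0)_i) v_i + o(s)$ (valid once $s$ is small enough that no coordinate changes sign), I would choose a finitely-supported perturbation direction $v$ such that moving $y^*$ by a small $\epsilon v$ strictly decreases $\|y^* - e_0\|$, while the strict inequalities $\|y^* - e\| < r^*$ for $e \in E \setminus \{e_0\}$, combined with the finite-dimensional compactness supplied by Proposition \ref{prop2}, preserve $e_0$ as the unique farthest point from $z := y^* + \epsilon v$. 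The resulting ball $B[z, \|z - e_0\|]$ lies in $J$ and is strictly inside $B^*$, contradicting the maximality of $B^*$.

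The main obstacle is uniformity in the final step: although $\|y^* - e\| < r^*$ for every $e \in E \setminus \{e_0\}$, this gap can be arbitrarily small along a sequence in $E$ (since $F$ need not be continuous), so one must ensure the single perturbation $\epsilon v$ simultaneously handles every competing element of $E$. This is precisely where the finite-dimensional compactness of $P_k(E)$ (via Proposition \ref{prop2}) must be used in concert with a tail estimate in the complementary $Q_k$-direction and the coordinate-additivity of the $\ell^1$-norm, features unavailable in a generic Banach space.
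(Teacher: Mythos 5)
There is a genuine gap, and it sits exactly at the step your whole plan hinges on. You propose to contradict maximality of $B^*=B[y^*,r^*]$ by producing a perturbation $z=y^*+\epsilon v$ with $\|z-e_0\|<r^*$, $B[z,\|z-e_0\|]\subsetneq B^*$, and $F(z)=e_0$. But the existence of such a $z$ is precisely what maximality of $B^*$ forbids, and it is precisely the hard content of the farthest point problem: since $F$ need not be continuous, the strict inequalities $\|y^*-e\|<r^*$ for $e\in E\setminus\{e_0\}$ give no uniform margin, as you yourself observe. Your proposed repair --- compactness of $P_k(E)$ via Proposition \ref{prop2} together with a tail estimate --- does not close this: competitors $e\in E$ can agree with $e_0$ on the first $k$ coordinates and differ from it only in the $Q_k$-tail, with gap $r^*-\|y^*-e\|$ arbitrarily small, and no fixed $k$ or fixed $\epsilon v$ controls all of them. (Also, your use of Proposition \ref{prop1} to rule out limits of $e_n=F(y_{t_n})$ is a misapplication: that proposition requires $F(x_n)$ to be \emph{constant}; the fact you want there follows instead from continuity of $D(\cdot,E)$, but in any case it does not feed into the missing step.) So the proposal is a plan whose decisive step is asserted, not proved.

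It is worth noting that the paper's proof runs in the opposite direction at exactly this point. Normalizing the maximal ball to be $B[0,1]$ with $F(0)=\hat e$, $\hat e=(b_1,b_2,\dots)$, $b_1>0$, it uses maximality to show that moving toward $\hat e$ along the first coordinate, i.e.\ taking $x=\delta_1/n$ with $1/n<b_1$, \emph{necessarily changes} the farthest point: $z_n:=F(\delta_1/n)\neq\hat e$, and moreover the first coordinate of $z_n$ is $<1/n$ while $\|z_n\|\to 1$. The additivity of the $\ell^1$-norm then gives $\|b_1\delta_1-z_n\|\to 1+b_1$, so $D(\{b_1\delta_1\},E)=1+b_1$, and Proposition \ref{prop2} applied to the compact singleton $K=\{b_1\delta_1\}$ produces $e'\in E$ with $\|b_1\delta_1-e'\|=1+b_1$, forcing $\|e'\|=1$ and $e'\neq\hat e$; this contradicts unique remotality at $0$, not maximality. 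In short, the paper accepts that the farthest point jumps under the perturbation and exploits the jump; your sketch requires that it not jump, which is both unproved and contrary to what the maximal ball is constructed to guarantee.
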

\begin{proof}
Let $E$ be a uniquely remotal set in $\ell^1$, and let $\hat{e}$ be the unique farthest point in $E$ from $0$, i.e. $F(0)=\hat{e}$. By Theorem \ref{thm1}, $J=\{B[y,||y-\hat{e}||]:F(y)=\hat{e}\}$ has a maximal element say $B[\hat{v},||\hat{v}-\hat{e}||]$. \\

Without loss of generality, we may assume that $\hat{v}=0$ and $||\hat{e}||=1$ so that the maximal element is the unit ball of $\ell^1$. Let $\hat{e}=(b_1,b_2,b_3,\dots)$. Since $||\hat{e}||=1$ then with no loss of generality we can assume that $b_1\neq 0$. Further, assume $b_1>0$. So, $b_1>\frac{1}{m_0}$ for some $m_0\in \mathbb{N}$. \\

 Let $\delta_1=(1,0,0,\dots )$ and consider the sequence $(\frac{\delta_1}{n})$ in $\ell^1$, where $n>m_0$. Then $F(\frac{\delta_1}{n})\neq \hat{e}$ for all $n>m_0$, since if $F(\frac{\delta_1}{n})=\hat{e}$ for some $n>m_0$, then for $w\in B[\frac{\delta_1}{n},||\frac{\delta_1}{n}-\hat{e}||]$, we have $||w||-||\frac{\delta_1}{n}||\leq ||w-\frac{\delta_1}{n}||\leq||\frac{\delta_1}{n}-\hat{e}||$. But $b_1>\frac{1}{n}$, so $||\frac{\delta_1}{n}-\hat{e}||=||\hat{e}||-\frac{1}{n}=||\hat{e}||-||\frac{\delta_1}{n}||$. Thus, $||w||\leq ||\hat{e}||=1$ and accordingly $w\in B[0,1]$, which contradicts the maximality of $B[0,1]$. Hence, $F(\frac{\delta_1}{n})\neq \hat{e}$ for all $n>m_0$.\\

 Let $F(\frac{\delta_1}{n})=z_n=(c_1^n,c_2^n,c_3^n,\dots)$. Then we must have $c_1^n<\frac{1}{n}$ for all $n>m_0$. Otherwise, we obtain that $||z_n-\frac{\delta_1}{n}||=||z_n||-||\frac{\delta_1}{n}||\leq 1-\frac{1}{n}=||\hat{e}-\frac{\delta_1}{n}||$, which contradicts the fact that $F(\frac{\delta_1}{n})=z_n$.

  Now, since $\frac{\delta_1}{n}\rightarrow 0$, then $||z_n||\rightarrow 1$. Further, the sequence $c_1^{n}$ converges to $ \lambda $, where $\lambda \leq 0$. \\

 Consider the set $P=\{b_1\delta_1\}$. Then, clearly $D(\hat{e},P)=\sum_{j=2}^\infty |b_j|<1$. Also, $D(z_n,P)=||z_n-b_1\delta_1||=|c_1^n-b_1|+\sum_{j=2}^\infty |c_j^n|$. Therefore,

  \begin{eqnarray*}
\lim_{n\rightarrow\infty} D(z_n,P)&=& (b_1+|\lambda|)+\lim_{n\rightarrow \infty} \sum_{j=2}^\infty |c_j^n|\\
  &=& b_1+|\lambda|+(1-|\lambda|)\\
 &=& 1+b_1
\end{eqnarray*}

 Since $D(P,E)\geq D(P,z_n)$ for all $n$, we get that $D(P,E)\geq1+b_1$. On the other hand, $\displaystyle D(P,E)=\sup_{e\in E}||b_1\delta_1-e||\leq b_1+1$, since $||e||\leq 1$ for every $e\in E$. Thus,
$$
 D(P,E)=1+b_1.
 $$
 By Proposition \ref{prop2}, $D(P,E)=||b_1\delta_1-e'||$ for some $e'\in E$. So, $$1+b_1\leq b_1+||e'||\leq 1+b_1,$$ which implies that $||e'||=1$. Therefore, $e'$ is another farthest point in E from $0$, i.e. $F(0)=\{e', \hat{e}\}$, which contradicts the unique remotality of $E$. Hence, $E$ must be a singleton.

\end{proof}
{\small

\end{document}